\newcommand{\F}{{\mathbb F}}
\newcommand{\hA}{{\widehat A}}
\newcommand{\hB}{{\widehat B}}
\newcommand{\hD}{{\widehat D}}
\newcommand{\hG}{{\widehat G}}
\newcommand{\hS}{{\widehat S}}
\newcommand{\alp}{\alpha}
\newcommand{\gam}{\gamma}
\newcommand{\lam}{\lambda}
\newcommand{\sig}{\sigma}
\newcommand{\E}{{\mathsf E}}
\newcommand{\longc}{,\dotsc,}
\newcommand{\longe}{=\dotsb=}
\newcommand{\sbs}{\subset}
\newcommand{\seq}{\subseteq}
\newcommand{\stm}{\setminus}
\newtheorem{lemma}{Lemma}
\newtheorem{theorem}{Theorem}
\newtheorem{corollary}{Corollary}
\newcommand{\refc}[1]{\ref{c:#1}}
\newcommand{\refl}[1]{\ref{l:#1}}
\newcommand{\reft}[1]{\ref{t:#1}}
\newcommand{\refs}[1]{\ref{s:#1}}
\newcommand{\refb}[1]{\cite{b:#1}}
\newcommand{\refe}[1]{\eqref{e:#1}}
\title[Small doubling in prime-order groups]%
  {Small doubling in prime-order groups: \\ from $2.4$ to $2.6$}
\author{Vsevolod F. Lev}
\email{seva@math.haifa.ac.il}
\address[Vsevolod Lev]{Department of mathematics,
  the University of Haifa at Oranim, Tivon 36006, Israel}
\author{Ilya D. Shkredov}
\email{ilya.shkredov@gmail.com}
\address[Ilya Shkredov]{Steklov Mathematical Institute, ul. Gubkina, 8,
    Moscow, Russia, 119991, and
	IITP RAS, Bolshoy Karetny per. 19, Moscow, Russia, 127994, and
	MIPT, Institutskii per. 9, Dolgoprudnii, Russia, 141701}
\begin{document}
\baselineskip=16pt

\begin{abstract}
Improving upon the results of Freiman and Candela-Serra-Spiegel, we show that
for a non-empty subset $A\seq\F_p$ with $p$ prime and $|A|<0.0045p$, (i) if
$|A+A|<2.59|A|-3$ and $|A|>100$, then $A$ is contained in an arithmetic
progression of size $|A+A|-|A|+1$, and (ii) if $|A-A|<2.6|A|-3$, then $A$ is
contained in an arithmetic progression of size $|A-A|-|A|+1$.

The improvement comes from using the properties of higher energies.
\end{abstract}

\maketitle

\section{Introduction. Summary of Results}

The \emph{sumset} and the \emph{difference set} of the subsets $A$ and $B$ of
an additively written abelian group are defined by
\begin{align*}
  A+B &= \{a+b\colon a\in A,\,b\in B \}
  \intertext{and \vskip -0.1in}
  A-B &= \{a+b\colon a\in A,\,b\in B \},
\end{align*}
respectively. We are mostly concerned with the groups of prime order which
are identified with the additive group of the corresponding field and,
accordingly, denoted $\F_p$; here $p$ is the order of the group.

The Cauchy-Davenport theorem asserts that if neither of $A,B\seq\F_p$ is
empty, then
  $$ |A+B| \ge \min\{|A|+|B|-1,p\}. $$
This basic theorem, proved by Cauchy~\refb{c} and independently rediscovered
by Davenport~\cite{b:d1,b:d2}, is arguably the earliest result in the area of
additive combinatorics.

The case of equality in the Cauchy-Davenport theorem was investigated by
Vosper.
\begin{theorem}[Vosper~\cite{b:v1,b:v2}]\label{t:v}
Let $p$ be a prime. If $A,B\seq\F_p$ satisfy $|A|,|B|\ge 2$ and $|A+B|\le
p-2$, then $|A+B|\ge|A|+|B|$ unless $A$ and $B$ are arithmetic progressions
sharing the same common difference.
\end{theorem}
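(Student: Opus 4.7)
The plan is to prove the contrapositive: assume $|A+B|=|A|+|B|-1$ (by the Cauchy--Davenport theorem, since $|A+B|\le p-2<p$, any value strictly below $|A|+|B|$ must equal $|A|+|B|-1$) and deduce that $A$ and $B$ are arithmetic progressions sharing the same common difference. I would proceed by induction on $|B|$, which by symmetry we may assume to be the smaller of $|A|,|B|$.

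For the base case $|B|=2$, write $B=\{x,x+d\}$; inclusion--exclusion gives $|A+B|=2|A|-|A\cap(A-d)|$, and setting this equal to $|A|+1$ yields $|A\cap(A-d)|=|A|-1$. Viewing $A$ inside the single orbit of $\F_p$ under translation by $d$, exactly one element of $A$ lacks a $d$-successor in $A$, which --- using $|A|<p$ --- forces $A$ to be an arithmetic progression of common difference $d$. For the inductive step with $|B|\ge 3$, pick any $b_0\in B$ and set $B':=B\seq\{b_0\}$. By Cauchy--Davenport, $|A|+|B|-2\le|A+B'|\le|A+B|=|A|+|B|-1$. In the \emph{good} case $|A+B'|=|A|+|B|-2$, so that $(A,B')$ is again a Cauchy--Davenport critical pair of size $\le p-2$, the induction hypothesis gives that $A$ and $B'$ are arithmetic progressions of a common difference $d$; a short argument then shows that $b_0$ must extend the $B'$-progression by one term at either end, since otherwise adding it back would create more than one new element in $A+B$.

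In the \emph{bad} case, $|A+B'|=|A+B|$ for every $b_0\in B$. To handle this I would pass to the complement $\bar C:=\F_p\stm(A+B)$, which has size $p-|A|-|B|+1\ge 2$. From $a+b\notin\bar C$ one gets $-B+\bar C\seq\F_p\stm A$, and Cauchy--Davenport applied to $(-B,\bar C)$ (valid since $|B|+|\bar C|-1=p-|A|<p$) yields $|-B+\bar C|\ge p-|A|$, forcing equality and hence a new critical Cauchy--Davenport pair $(-B,\bar C)$. The goal is then to transfer the desired progression structure from this dual pair back to $(A,B)$, using that $A=\F_p\stm(-B+\bar C)$.

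The main obstacle is precisely this bad case: the duality swap to $(-B,\bar C)$ does not automatically reduce to a smaller instance, since $|\bar C|$ may exceed $\min(|A|,|B|)$. One must either induct on a parameter that provably decreases under the duality --- such as $\min(|A|,|B|,|\bar C|)$ --- or exploit the strong rigidity of the bad case directly: the property that removing any single element of $B$ leaves $A+B$ unchanged means every element of $A+B$ admits representations using many different $b$'s, yielding additive structure on $B$ that can be converted into progression structure. The base case and the good inductive step are essentially routine once the Cauchy--Davenport framework is in place; the entire subtlety of Vosper's theorem is concentrated in the bad case.
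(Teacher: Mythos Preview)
The paper does not prove Vosper's theorem; it is stated as a classical result with a citation to Vosper's original papers and then used as a black box. There is therefore no paper-proof to compare against.

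Regarding your proposal on its own merits: the overall strategy---induct on $|B|$, handle $|B|=2$ directly, and in the inductive step split into a ``good'' case (some $b_0\in B$ is removable) and a ``bad'' case treated via the complement $\bar C=\F_p\setminus(A+B)$---is a standard and valid route to Vosper's theorem. Your base case and good case are essentially correct (modulo the typo $B':=B\seq\{b_0\}$, which should read $B\setminus\{b_0\}$).

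The bad case, however, is genuinely incomplete, and your proposed fix does not work. The duality $(A,B)\mapsto(-B,\bar C)$ satisfies $(-B)+\bar C=\F_p\setminus A$, so the complement of the dual sumset is exactly $A$; thus the passage to the dual simply permutes the triple of sizes $(|A|,|B|,|\bar C|)$ and leaves $\min(|A|,|B|,|\bar C|)$ unchanged. Iterating the duality therefore cycles without progress, and this parameter cannot serve as an induction variable. To close the argument one needs an additional idea---for example, showing that in the bad case one can instead remove an element from the \emph{other} set (from $A$, or from $-B$ in the dual pair) and land in a good case, or using a Davenport/e-transform that genuinely shrinks one of the sets---none of which your sketch supplies. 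Your second suggestion (``exploit the strong rigidity directly'') is just a restatement of the problem. So the crux of Vosper's theorem remains unproved in your outline.
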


A far-reaching extension of Vosper's theorem, due to Freiman, establishes the
structure of sets $A\seq\F_p$ with the doubling coefficient $|A+A|/|A|$ up to
$2.4$.
\begin{theorem}[Freiman~\refb{f}]\label{t:f}
Let $p$ be a prime. If $A\seq\F_p$ satisfies $|A+A|<2.4|A|-3$ and $|A|<p/35$,
then $A$ is contained in an arithmetic progression with at most $|A+A|-|A|+1$
terms.
\end{theorem}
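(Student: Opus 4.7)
The plan is to reduce the problem to its integer analogue. Recall Freiman's $3k-4$ theorem in $\mathbb{Z}$: if a finite set $A'\sbs\mathbb{Z}$ satisfies $|A'+A'|\le 3|A'|-4$, then $A'$ is contained in an arithmetic progression of length at most $|A'+A'|-|A'|+1$. The hypothesis $|A+A|<2.4|A|-3$ certainly implies $|A+A|\le 3|A|-4$, so it would suffice to produce a Freiman $2$-isomorphism from $A$ onto a subset $A'\sbs\mathbb{Z}$: such an isomorphism preserves sumset cardinalities, and an arithmetic progression in $\mathbb{Z}$ containing $A'$ pulls back to an arithmetic progression in $\F_p$ of the same length containing $A$.

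To construct the $2$-isomorphism I would use a \emph{rectification} argument. If we can find $c\in\F_p^*$ such that $cA$, lifted to $\{0,1,\dotsc,p-1\}$, is contained in an arc of length less than $p/2$, then both $cA$ and $cA+cA$ reduce injectively from $\mathbb{Z}$ onto their images in $\F_p$, and the corresponding lift $A'\sbs\mathbb{Z}$ is Freiman $2$-isomorphic to $A$. To exhibit such a $c$, first bound the difference set by Pl\"unnecke-Ruzsa: from $|A+A|<2.4|A|$ one obtains $|A-A|\le K|A|$ for an explicit constant $K$. Then apply a Dirichlet-style pigeonhole argument over dilations: under the density hypothesis $|A|<p/35$ the set $A-A$ is small enough compared to $p$ that for some $c\in\F_p^*$ the dilate $c(A-A)$ falls into a short arc around $0$, which in turn forces $cA$ into an arc of length less than $p/2$. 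The threshold $p/35$ is calibrated precisely so that this pigeonhole succeeds at doubling $2.4$.

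The main obstacle is the rectification step; everything afterwards is routine. Once $A$ has been shown to be Freiman $2$-isomorphic to a set $A'\sbs\mathbb{Z}$, the $3k-4$ theorem applies to $A'$ and the resulting progression transports back to $\F_p$ through the inverse isomorphism. The delicate point is to match the Pl\"unnecke-type bound on $|A-A|$ against the density threshold $|A|<p/35$ so that the dilation argument really produces an embedding into a sufficiently short arc; a larger doubling constant or a weaker density hypothesis would break this balance, which is consistent with the stated numerical thresholds.
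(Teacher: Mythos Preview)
The paper does not give its own proof of Theorem~\reft{f}; it is quoted as Freiman's result. However, the surrounding material (Lemma~\refl{Fpoints}, Lemma~\refl{fFB}, Corollary~\refc{f}) makes clear what the intended argument is, and it is \emph{not} the one you sketch. Freiman's route is: from $|A+A|<2.4|A|$ one gets $\E(A)\ge|A|^4/|A+A|>|A|^3/2.4$, and the Fourier expression for $\E(A)$ then forces a nonprincipal character $\chi$ with $|\hA(\chi)|\ge\eta|A|$ for some explicit $\eta$; Lemma~\refl{Fpoints} converts this into an arc of length $(p+1)/2$ containing at least $\tfrac12(1+\eta)|A|\ge\tfrac13K|A|$ elements of $A$; and Lemma~\refl{fFB} bootstraps from that partial concentration to the full conclusion via the integer $3k-4$ theorem. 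The density bound $|A|<p/35$ enters in making the Fourier lower bound on $\eta$ large enough and in the bookkeeping of Lemma~\refl{fFB}.

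Your proposal has a genuine gap at the rectification step. You assert that a ``Dirichlet-style pigeonhole argument over dilations'' will produce $c\in\F_p^\ast$ with $c(A-A)$ contained in a short arc, using only that $|A-A|\le K'|A|<K'p/35$ for some Pl\"unnecke constant $K'$. No such argument exists at this density. A pigeonhole over dilations can show that \emph{on average} about half of $c(A-A)$ lands in any fixed half-arc, but it gives no single $c$ for which \emph{all} of $c(A-A)$ does; passing from average to worst case is precisely the hard part. Direct rectification of a set with bounded doubling into the integers (as in Green--Ruzsa, cited in the paper) requires density far below $p/35$ --- the paper quotes the threshold $|A|<96^{-108}p$ --- and your sketch offers no mechanism to beat this. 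The device that actually locates a good dilation at density $p/35$ is the large Fourier coefficient: a character $\chi$ with $|\hA(\chi)|$ large is exactly a dilation under which $A$ clusters in a half-arc, and it is obtained analytically from the energy bound, not combinatorially from pigeonhole. Moreover, even the Fourier step does not put all of $A$ into a short arc at once; it puts a $\tfrac13K$-fraction there, and one needs the additional argument of Lemma~\refl{fFB} to absorb the rest. Your plan skips both of these ingredients.
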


Theorem~\reft{f} is commonly referred to as \emph{Freiman's
$2.4$-theorem}.

While the expression $|A+A|-|A|+1$ in Theorem~\reft{f} is sharp, the
assumptions $|A+A|<2.4|A|-3$ and $|A|<p/35$ are certainly not and,
conjecturally, can be substantially relaxed. Indeed, some improvements along
these lines have been obtained. For instance, as it follows from a general
result by Green and Ruzsa~\refb{gr}, the conclusion of Theorem~\reft{f} holds
true provided that $|A+A|<3|A|-3$ (which is the best possible bound), and
that $A$ is very small as compared to $p$: namely, $|A|<96^{-108}p$. Two more
results to mention are due to Rodseth~\refb{r} (relaxing the density
assumption in Theorem~\reft{f} to $|A|<p/10.7$), and
Candela-Serra-Spiegel~\refb{css} (replacing the assumptions with
$|A+A|<2.48|A|-7$ and $|A|<10^{-10}p$).

We recommend the interested reader to check~\refb{css} for more discussion
and historical comments.

In this paper we make yet another step in the indicated direction,
improving the constants further and establishing a similar result for the
difference set $A-A$.

\begin{theorem}\label{t:diff}
Let $p$ be a prime, and suppose that $A\seq\F_p$ satisfies $|A|<0.0045p$. If
$|A-A|<2.6|A|-3$, then $A$ is contained in an arithmetic progression with at
most $|A-A|-|A|+1$ terms.
\end{theorem}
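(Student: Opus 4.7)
The plan is to reduce the theorem to its integer analogue and then invoke the classical $3k-4$-type theorem for difference sets in $\mathbb{Z}$, which asserts that a set $B\sbs\mathbb{Z}$ with $|B-B|<3|B|-3$ is contained in an arithmetic progression of length $|B-B|-|B|+1$. The transfer from $\F_p$ to $\mathbb{Z}$ would proceed by producing a nonzero dilation $\xi\in\F_p\stm\{0\}$ such that $\xi\cdot A$ sits inside an arithmetic progression in $\F_p$ of length proportional to $|A|$, short enough that the reduction modulo $p$ of a corresponding integer interval is a Freiman $2$-isomorphism onto $\xi\cdot A$. The density hypothesis $|A|<0.0045\,p$ is what supplies room for this isomorphism and rules out the wrap-around obstruction.

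The key new ingredient, as advertised in the abstract, is the use of a higher-energy quantity to locate the rectifying character $\xi$. I would introduce the third moment $E_3(A)=\sum_x|A\cap(A+x)|^3$, and combine three ingredients: the hypothesis $|A-A|<2.6|A|-3$, which forces $\sum_x r_{A-A}(x)=|A|^2$ to be concentrated on few differences; a sharp Pl\"unnecke-Ruzsa inequality for $|2A-2A|$ in terms of $|A-A|/|A|$; and a H\"older-type inequality that converts a lower bound on $E_3(A)$, written on the Fourier side as a weighted sum of $|\widehat{1_A}(\xi)|^2$ against $r_{A-A}$-mass, into the existence of a single nonzero character $\xi$ with $|\widehat{1_A}(\xi)|$ at least a concrete fraction of $|A|$. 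Because $E_3$ has a higher moment than the ordinary additive energy $E_2(A)$, this route delivers quantitatively sharper Fourier concentration than the $E_2$-based arguments of Freiman and Candela--Serra--Spiegel, and this is what enables the threshold to move from $2.4$--$2.48$ up to $2.6$. With a sufficiently large nontrivial Fourier coefficient in hand, a standard rectification lemma places $\xi\cdot A$ inside an arc of length $<p/2$, and then inside a short AP in $\F_p$; one then passes to $\mathbb{Z}$, applies the integer $3k-4$ theorem, and pulls the resulting progression back via $\xi^{-1}$.

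The principal obstacle is quantitative tightness. Pushing the threshold all the way to $2.6$ requires that essentially no constants be lost at any step: the Pl\"unnecke-Ruzsa bound for $|2A-2A|$ must be close to optimal, the H\"older chain converting $E_3$-mass into a single Fourier coefficient must be calibrated to extract nearly the full mass, and the subsequent rectification must not squander the gain. In particular, the extracted Fourier coefficient must be large enough, relative to the density hypothesis $|A|<0.0045\,p$, that $\xi\cdot A$ lies in an arc short enough for the Freiman $2$-isomorphism to be valid; balancing these three estimates against one another, rather than executing any single one in isolation, is where I expect most of the work to live. The final integer step is classical and should add no obstacle once a tight Fourier concentration has been established.
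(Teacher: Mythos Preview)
Your high-level architecture is right and matches the paper: extract a large nontrivial Fourier coefficient via a higher-energy argument, then rectify and invoke the integer $3k-4$ theorem.  However, the concrete mechanism you sketch for producing that Fourier coefficient is not the one that works, and in two places it would not go through as stated.

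First, there is no useful ``Fourier side'' of $\E_3(A)$: the cubic moment $\sum_x |A_x|^3$ does not have a clean spectral identity, so a H\"older chain passing $\E_3$-mass directly to a single Fourier coefficient is not available.  In the paper the flow is the reverse of what you describe: one first gets a \emph{lower} bound on $\E_3(A)$ by Cauchy--Schwarz against the count of pairs $(x,y)\in D\times D$ with $x-y\in D$ (a Schur-triple bound, Lemma~\refl{3/4}), then combines it with the pointwise inequality $|A_x|-|A|^2/|D|\le (1-1/K)|A|$ to convert this into an improved \emph{lower} bound on the ordinary energy $\E(A)=\E_2(A)$ (Lemma~\refl{CS}).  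So $\E_3$ is used combinatorially, not spectrally, and only to sharpen $\E_2$.

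Second, the Pl\"unnecke--Ruzsa bound on $|2A-2A|$ plays no role and would not by itself push the constant to $2.6$.  The extra leverage in the paper comes instead from the Katz--Koester observation $A-A_x\subseteq (A-A)_x$: one estimates $\sum_{x}|A_x||D_x|=\E(A,D)$ from below by $\sum_x|A_x||A-A_x|$ (via Cauchy--Davenport termwise) and from above by $\alpha K^2|A|^3+\eta^2 K(1-\alpha K)|A|^3$ on the Fourier side.  Feeding the improved $\E(A)$ bound into the lower estimate and solving yields $\eta\ge \tfrac{2}{3}\cdot 2.6-1$ exactly at the density threshold $0.0045$, after which Corollary~\refc{f} finishes.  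Your proposal is missing both the Schur-triple input and the $\E(A,D)$/Katz--Koester step; without them the numerics do not close at $2.6$.
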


\begin{theorem}\label{t:sum}
Let $p$ be a prime, and suppose that $A\seq\F_p$ satisfies $100<|A|<0.0045p$.
If $|A+A|<2.59|A|-3$, then $A$ is contained in an arithmetic progression with
at most $|A+A|-|A|+1$ terms.
\end{theorem}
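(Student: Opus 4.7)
\emph{Overall strategy.} Since $|A|<0.0045p$, the plan is to transfer the problem from $\F_p$ to $\mathbb{Z}$ via a Freiman-isomorphism argument and then prove the corresponding statement over the integers. The hypothesis $|A+A|<2.59|A|-3$ combined with Pl\"unnecke-Ruzsa gives bounds of the form $|nA-mA|\le K_{n,m}|A|$ with $K_{n,m}$ polynomial in the doubling; for small $n,m$ and $|A|<0.0045p$ these quantities remain well below $p$, which is exactly what a rectification lemma (of Bilu-Lev-Ruzsa or Green-Ruzsa type) requires to produce a Freiman isomorphism between $A$ and some $A'\sbs\mathbb{Z}$ preserving sumsets and the relevant higher energies.

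\emph{The integer case through higher energies.} The reduced problem reads: if $A'\sbs\mathbb{Z}$ has $\gcd(A'-A')=1$, $|A'|>100$ and $|A'+A'|<2.59|A'|-3$, then $\max A'-\min A'<|A'+A'|-|A'|+1$. After translating, assume $A'\sbs[0,\ell]$ with $\ell$ the length of the minimal containing progression, and split $A'=A'_-\cup A'_+$ according to whether the element lies in $[0,\ell/2]$ or not. The classical route bounds $|A'+A'|$ from below by $|A'_-+A'_-|$ and $|A'_++A'_+|$, which sit in essentially disjoint intervals, together with cross-term information; unassisted, this produces the $2.4$-barrier of Theorem~\reft{f}. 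The improvement comes by enriching the analysis with the energies $E_2(A')$ and $E_3(A')$: Cauchy-Schwarz gives $E_2(A')\ge|A'|^4/|A'+A'|$, and a second Cauchy-Schwarz combined with a Pl\"unnecke bound on $|3A'|$ yields a lower bound on $E_3(A')$. The third energy forces a large ``popular'' difference set $\{s:r_{A'-A'}(s)\ge\tau|A'|\}$ whose rigid structure restricts how the two halves $A'_-$ and $A'_+$ can be distributed inside $[0,\ell]$; feeding this back into the halving argument raises the tolerable doubling from $2.4$ to $2.59$.

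\emph{Main obstacle.} The bottleneck is the quantitative bookkeeping. Losses appear in the Pl\"unnecke constants used for rectification and for bounding $|3A'|$, in both Cauchy-Schwarz steps passing from $E_2$ to $E_3$, in the cross-term estimate for $A'_-+A'_+$, and in the geometric rearrangement supplied by the popular difference set. All of these must be controlled tightly enough that their aggregate saving over the classical $2.4$-argument exceeds roughly $0.19|A'|$. The hypothesis $|A'|>100$ is needed precisely to absorb the ``$-3$'' in the doubling assumption together with various $O(1)$ terms arising in the energy inequalities, and the constant $2.59$ (rather than the $2.6$ obtained for $A-A$ in Theorem~\reft{diff}) reflects the extra slack incurred when converting $A-A$-type energy information into estimates on $A+A$.
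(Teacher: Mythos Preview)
Your proposal has a genuine gap at the very first step. A Green--Ruzsa or Bilu--Lev--Ruzsa rectification producing a Freiman $2$-isomorphism to $\mathbb{Z}$ is not available at density $|A|/p$ as large as $0.0045$. With $K<2.59$, Pl\"unnecke gives only $|nA-mA|\le K^{n+m}|A|$, and already $K^{8}|A|$ exceeds $p$ by a large factor; the paper itself notes that the Green--Ruzsa route requires $|A|<96^{-108}p$. So ``transfer to $\mathbb{Z}$ and argue combinatorially'' is precisely what one cannot do here, and the whole point of the density hypothesis $0.0045p$ is that it is still far above the rectification threshold.

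The paper avoids full rectification entirely. It works Fourier-analytically inside $\F_p$: writing $S=A+A$ and using the Katz--Koester inclusion $A+A_x\subseteq S_x$ together with Cauchy--Davenport on $|A+A_x|$, it bounds $\E(A,S)=\sum_x|A_x||S_x|$ from below by roughly $(1+1/K)|A|^3$; comparing this with the Fourier expansion $\E(A,S)=p^{-1}\sum_\chi|\hA(\chi)|^2|\hS(\chi)|^2$ forces a nonprincipal character with $|\hA(\chi)|\ge\eta_0|A|$ where $\eta_0=\tfrac23\cdot2.59-1$. Only then does Corollary~\refc{f} (Freiman's half-arc lemma plus the partial rectification of Lemma~\refl{fFB}) finish the job, and that step needs merely $|A|<p/12$. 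Your sketch of the integer-side argument (halving plus $E_3$ plus a popular-difference rigidity step) is also not a known route to the constant $2.59$ and would need to be made precise, but the decisive obstruction is the rectification you invoke at the outset.
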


Our method allows for further slight improvements, but we tried to keep a
reasonable balance to obtain good constants while avoiding excessively
technical computations.

The proofs of Theorems~\reft{diff} and~\reft{sum} presented in
Section~\refs{proofs} follow, from some point on, the familiar path involving
Fourier bias and partial rectification. The major novelty is that we use an
argument of combinatorial nature, based on the properties of higher energies,
to obtain a bias larger than that given by the standard reasoning.

In the appendix we apply our approach to obtain large Fourier bias for the
indicator function of a small-difference set in the general settings of an
arbitrary finite abelian group.

\section{Notation and the Toolbox}

In this section we gather the notation and results used in
Section~\refs{proofs} to prove Theorems~\reft{diff} and~\reft{sum}.

We will occasionally identify sets with their indicator functions; thus, for
instance, for a subset $A$ of a finite abelian group $G$, we have $\sum_{x\in
G}A(x)=|A|$. The non-normalized Fourier coefficients of $A$ are denoted
$\hA$; that is,
  $$ \hA(\chi)=\sum_{a\in A}\chi(a),\quad \chi\in\hG. $$
Hence, $\hA(1)=|A|$ (where $1$ denotes the principal character), and the
Parseval identity reads
  $$ \sum_{\chi\in\hG} |\hA(\chi)|^2 = |A||G|. $$

For a finite subset $A$ and an element $x$ of an abelian group, we let
$A_x:=A\cap(A+x)$; therefore, $|A_x|$ is the number of representations of
$x$ as a difference of two elements of $A$, and in particular $|A_x|=0$ if
$x\notin A-A$. We have
  $$ \sum_{x\in A-A} |A_x| = |A|^2 $$
and
  $$ A_x-A \seq (A-A)_x. $$
The later relation, often called the \emph{Katz-Koester observation}
\refb{kk}, can be proved as follows:
\begin{multline*}
  A_x-A = (A\cap(A+x))-A \seq (A-A)\cap ((A+x)-A) \\
                                       = (A-A) \cap ((A-A)+x) = (A-A)_x.
\end{multline*}
The sum version of the Katz-Koester observation is
  $$ A_x+A \seq (A+A)_x. $$

The \emph{common energy} $\E(A,B)$ of finite subsets $A$ and $B$ of an
abelian group $G$ is the number of quadruples
 $(a_1,a_2,b_1,b_2)\in A^2\times B^2$ such that $a_1-a_2=b_1-b_2$;
equivalently,
  $$ \E(A,B) = \sum_{x\in G} |A_x||B_x|. $$
Also, if $G$ is finite, then
  $$ \E(A,B) = \frac1{|G|}\,\sum_{\chi\in\hG} |\hA(\chi)|^2 |\hB(\chi)|^2. $$
We write $\E(A)$ as a commonly used abbreviation of $\E(A,A)$. For $k>0$ we
set
  $$ \E_k(A) := \sum_{x\in A-A} |A_x|^k; $$
thus, $\E_2(A)=\E(A)$, and if $k$ is an integer, then
  $$ \E_k (A) = |\{(a_1\longc a_k,b_1\longc b_k)\in A^{2k} \colon
                                              a_1-b_1 \longe a_k-b_k \}|. $$

For real $u\le v$, by $[u,v]$ we denote the set of all integers $u\le n\le
v$, and also the ``canonical'' image of this set in $\F_p$.

%\begin{theorem}[Pollard~\refb{p}]\label{t:p}
%Let $p$ be a prime, and suppose that $A,B\seq\F_p$. For integer $k\ge 1$,
%denote by $N_k$ the number of elements $g\in\F_p$ possessing at least $k$
%representations in the form $g=a+b$ with $a\in A$ and $b\in B$. Then
%  $$ N_1\longp N_k \ge k \min\{|A|+|B|-k,p\} $$
%whenever $k\le\min\{|A|,|B|\}$.
%\end{theorem}

The following theorem follows easily from the results of~\refb{f1}.
\begin{theorem}[Freiman~\refb{f1}]\label{t:F3n-3}
Suppose that $A$ is a finite set of integers. If $|A+A|\le 3|A|-4$, then $A$
is contained in an arithmetic progression with at most $|A+A|-|A|+1$ terms.
Similarly, if $|A-A|\le 3|A|-4$, then $A$ is contained in an arithmetic
progression with at most $|A-A|-|A|+1$ terms.
\end{theorem}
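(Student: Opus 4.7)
The plan is to normalise $A$ by an affine transformation and then invoke Freiman's classical inequality for sets of integers from \refb{f1}. First I would translate $A$ to put $\min A=0$ and divide through by $\gcd A$, reducing to the case $A\seq\{0,1,2,\dotsc\}$ with $0\in A$ and $\gcd(A)=1$. Both $|A+A|$ and $|A-A|$ are unchanged by this operation, and an arithmetic progression of length $\ell$ containing the normalised set lifts to one of the same length containing the original. Writing $k:=|A|$ and $d:=\max A$, the set $A$ lies in $[0,d]$, which is an arithmetic progression of $d+1$ terms; it therefore suffices to prove $d\le|A+A|-k$ in the sumset case and $d\le|A-A|-k$ in the difference-set case.

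For this I would quote the basic inequality of \refb{f1}, namely that for normalised $A$ as above,
\begin{equation*}
  |A+A| \ge \min\{3k-3,\,k+d\}, \qquad |A-A| \ge \min\{3k-3,\,k+d\}.
\end{equation*}
Since the hypothesis $|A\pm A|\le 3k-4$ rules out the first alternative, the second must hold, giving $d\le|A\pm A|-k$. Hence $A\seq[0,d]\seq[0,|A\pm A|-k]$, which is an arithmetic progression of $|A\pm A|-k+1$ terms, as claimed.

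The only combinatorial content sits in the inequality $|A\pm A|\ge\min(3k-3,\,k+d)$ itself, and this is where the real difficulty lies. The standard argument, which I would import from \refb{f1} rather than reproduce, exhibits the $2k-1$ obvious elements of $A+A$ arising from $a_1+A$ and $A+a_k$, and then produces the missing $d-(k-1)$ elements from the gaps of $A$ in $[0,d]$, unless $A$ is already an arithmetic progression; the case of $A-A$ is entirely analogous. Since the excerpt already advertises Theorem~\reft{F3n-3} as an easy consequence of \refb{f1}, my proof would invoke this estimate as a black box.
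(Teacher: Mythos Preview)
Your proposal is correct and is precisely the ``easy'' derivation the paper has in mind: the paper gives no proof of Theorem~\reft{F3n-3} at all, merely stating that it ``follows easily from the results of~\refb{f1}'', and your normalise-then-apply-Freiman's-inequality argument is exactly how one extracts the arithmetic-progression conclusion from the lower bound $|A\pm A|\ge\min\{3k-3,\,k+d\}$. There is nothing to compare against beyond that.
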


We need two more lemmas due to Freiman; the former originates from~\refb{f2},
while the latter is implicit in~\refb{f} and in fact in any exposition of the
proof of Theorem~\reft{f}, such as~\cite[Section~2.8]{b:n}.
\begin{lemma}[Freiman~\refb{f2}]\label{l:Fpoints}
Suppose that $Z$ is a finite subset of the unit circle on the complex
plane. If
  $$ \Big| \sum_{z\in Z} z \Big| = \eta |Z|, $$
then there is an open arc of the circle of the angle measure $\pi$ containing
at least $\frac12(1+\eta)|Z|$ elements of $Z$.
\end{lemma}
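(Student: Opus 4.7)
The plan is a continuous averaging argument over all semicircles of the unit circle. After rotating, I may assume the sum $\sum_{z \in Z} z$ equals the nonnegative real number $\eta|Z|$. For real $\theta$, I let $H_\theta$ denote the open semicircle centered at $e^{i\theta}$, and set $N(\theta) := |H_\theta \cap Z|$; the task reduces to producing some $\theta$ with $N(\theta) \ge \tfrac12(1+\eta)|Z|$.

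The main step is to evaluate $\int_{-\pi}^{\pi} N(\theta)\, e^{i\theta}\, d\theta$ by Fubini. For each $z = e^{i\phi_z}$, the set of $\theta$ for which $z \in H_\theta$ is an arc of length $\pi$, over which $\int e^{i\theta}\, d\theta = 2 e^{i\phi_z}$; summing over $z \in Z$ gives
\[ \int_{-\pi}^{\pi} N(\theta)\, e^{i\theta}\, d\theta \;=\; 2\sum_{z \in Z} z \;=\; 2\eta|Z|, \]
so on taking real parts, $\int_{-\pi}^{\pi} N(\theta)\cos\theta\, d\theta = 2\eta|Z|$.

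To finish, I use the antipodal symmetry $N(\theta) + N(\theta + \pi) = |Z|$, which is valid outside the finitely many angles $\phi_z \pm \pi/2$: substituting $\theta \mapsto \theta + \pi$ on $(\pi/2, 3\pi/2)$ collapses the identity to
\[ \int_{-\pi/2}^{\pi/2} N(\theta) \cos\theta\, d\theta \;=\; (1+\eta)|Z|, \]
and bounding $N(\theta) \le M := \max_\theta N(\theta)$ in the integrand, together with $\int_{-\pi/2}^{\pi/2} \cos\theta\, d\theta = 2$, yields $2M \ge (1+\eta)|Z|$, as desired. The only substantive point is identifying the right integral to compute; once $\int N(\theta)\, e^{i\theta}\, d\theta$ is seen to relate the rotation-averaged semicircle count to the center of mass of $Z$, the remaining manipulations are entirely routine.
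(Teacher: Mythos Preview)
Your argument is correct. The averaging identity
\[
  \int_{-\pi}^{\pi} N(\theta)\,e^{i\theta}\,d\theta
    = \sum_{z\in Z}\int_{\phi_z-\pi/2}^{\phi_z+\pi/2} e^{i\theta}\,d\theta
    = 2\sum_{z\in Z} z
\]
is exactly right, and the reduction via $N(\theta)+N(\theta+\pi)=|Z|$ (valid off a finite set, hence almost everywhere) to
\[
  \int_{-\pi/2}^{\pi/2} N(\theta)\cos\theta\,d\theta = (1+\eta)|Z|
\]
goes through cleanly; since $\cos\theta\ge 0$ on $[-\pi/2,\pi/2]$ and $\int_{-\pi/2}^{\pi/2}\cos\theta\,d\theta=2$, the bound $\max_\theta N(\theta)\ge\tfrac12(1+\eta)|Z|$ follows. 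The maximum is attained because $N$ is integer-valued and piecewise constant with finitely many pieces.

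As for comparison: the paper does not actually supply a proof of this lemma --- it is quoted from Freiman~\refb{f2} as a black box (see also the exposition in~\cite[Section~2.8]{b:n}). Your continuous averaging over semicircle centers is essentially the classical argument, so there is no meaningful divergence to discuss.
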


\begin{lemma}[Freiman~\refb{f}]\label{l:fFB}
Suppose that $p$ is a prime, and that a subset $A\seq\F_p$ satisfies
$|A|<p/12$ and $|A+A|<K|A|-3$ with some $2\le K\le3$. If there is an
arithmetic progression in $\F_p$ with $(p+1)/2$ terms, containing at least
$\frac13\,K|A|$ elements of $A$, then, indeed, the whole set $A$ is contained
in an arithmetic progression with at most $|A+A|-|A|+1$ terms.
\end{lemma}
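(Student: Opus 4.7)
The plan is to rectify the large intersection $A_1 := A \cap P$ to an integer interval via Freiman's $3k-4$ theorem, then use a disjointness argument to force the rest of $A$ into a short arc of $\F_p$, and finally invoke the same theorem a second time.

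After an affine change of variables in $\F_p$ (which preserves $|A|$ and $|A+A|$), I may assume $P = [0,(p-1)/2]$, so that $A_1 := A \cap P$ satisfies $|A_1| \geq K|A|/3 \geq 2|A|/3$. Because $A_1$ lies in an integer segment of length $(p+1)/2$, the sumset $A_1+A_1$ does not wrap modulo $p$, whence
\[ |A_1 + A_1|_{\mathbb{Z}} = |A_1 + A_1|_{\F_p} \leq |A+A| < K|A|-3 \leq 3|A_1|-3. \]
Theorem~\reft{F3n-3} then yields an integer AP $Q \supseteq A_1$ with $|Q| \leq |A_1+A_1| - |A_1| + 1 \leq 2|A_1| - 3 < p/6$. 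After dilating in $\F_p$ by the multiplicative inverse (mod $p$) of the common difference of $Q$ and translating, I may further assume $Q = [0,\ell-1]$ with $\ell < p/6$, so that $A_1 = A \cap [0,\ell-1]$ in the new coordinates.

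The crux is to rule out any $a \in A$ lying in the middle range $[2\ell-1,\,p-\ell]$. For such $a$ the translate $a + A_1$ sits in $[2\ell-1,\,p-1]$ while $A_1 + A_1$ sits in $[0,\,2\ell-2]$, and the two are disjoint subsets of $A+A$, giving
\[ |A+A| \geq |A_1+A_1| + |A_1| \geq 3|A_1| - 1 \geq K|A| - 1, \]
which contradicts the hypothesis $|A+A| < K|A|-3$. Hence no such $a$ exists, and in the signed representation of $\F_p$ we obtain $A \seq [-(\ell-1),\,2\ell-2]$, an arc of length $3\ell-2 < p/2$.

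It remains to lift $A$ to this integer interval. Since $2(3\ell-2)-1 < p$, sums of two lifts do not wrap, so $|A+A|_{\mathbb{Z}} = |A+A|_{\F_p} < K|A|-3 \leq 3|A|-3$. A second application of Theorem~\reft{F3n-3} then places $A$ inside an integer AP with at most $|A+A|-|A|+1$ terms, and since this length is bounded by $(K-1)|A|-2 < p/6$, reducing modulo $p$ yields the required progression in $\F_p$. The only non-routine step is the disjointness argument of the third paragraph; everything else is bookkeeping around two invocations of Theorem~\reft{F3n-3}.
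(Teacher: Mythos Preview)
Your proof is correct and follows essentially the same route as the paper's sketch: rectify the large piece via Theorem~\reft{F3n-3}, use a disjointness argument with a translate to force $A$ into a short arc, then rectify a second time. One small quibble: after your second affine change you assert $A_1 = A \cap [0,\ell-1]$, but this equality is neither proved nor needed --- only the inclusion $A_1 \subseteq [0,\ell-1]$ (which does hold) is used in the disjointness step.
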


An essentially identical statement holds true for the subsets $A\seq\F_p$
with the difference set satisfying $|A-A|<K|A|-3\ (2\le K\le 3)$. For
self-completeness, we provide a very brief sketch of the proof, addressing
both the sum and the difference sets together.

\begin{proof}[Proof of Lemma~\refl{fFB}]
Scaling and translating $A$ appropriately, we assume without loss of
generality that, with $A':=A\cap[0,(p-1)/2]$, and with $A''$ defined to be
the inverse image of $A'$ in $[0,(p-1)/2]$ under the canonical homomorphism,
we have $0\in A''$, $\gcd(A'')=1$, and $|A''|=|A'|\ge\frac13\,K|A|$. Thus,
  $$ |A''\pm A''| = |A'\pm A'| \le |A\pm A| < K|A| - 3 \le 3|A''| - 3, $$
and by Theorem~\reft{F3n-3}, the set $A''$ is contained in an arithmetic
progression with at most $|A''\pm A''|-|A''|+1$ terms; that is, letting
$l:=\max A''$,
  $$ l \le |A''\pm A''|-|A''| \le |A\pm A| - \frac13\,K|A|
                                              < \frac23\,K|A|-3 < p/6. $$
Therefore, $A'\seq[0,l]$ with $l<p/6$, and it follows that
$A'+A'-A'\seq[-l,2l]$, showing that for any group element $x\in[2l+1,p-l-1]$,
the sets $x+A'$ and $A'+A'$ are disjoint. If we had $x\in A$, then, in view
of the Cauchy-Davenport theorem, we would have
  $$ |A\pm A| \ge |A'\pm A'| + |x+A'| \ge 3|A'|-1 \ge K|A| - 1, $$
a contradiction. Thus, $A\seq[-l,2l]$, showing that $A$ is contained in an
arithmetic progression with at most $(p+1)/2$ terms. Considering now the
inverse image of $A$ in the interval $[-l,2l]$, we conclude, as above, that
this image, and therefore the set $A$ itself, are in fact contained in
arithmetic progressions with at most $|A\pm A|-|A|+1$ terms, as wanted.
\end{proof}

Combining Lemmas~\refl{Fpoints} and~\refl{fFB} we obtain
\begin{corollary}\label{c:f}
Let $p$ be a prime, and suppose that $A\seq\F_p$ is a set such that
$|A|<p/12$ and $|A\pm A|<K|A|-3$ with some $2\le K\le 3$. If there exists a
nonprincipal character $\chi\in\widehat{\F_p}$ such that
$|\hA(\chi)|\ge\eta|A|$, where $\eta\in[0,1]$ satisfies
$\frac12(1+\eta)\ge\frac13K$, then $A$ is contained in an arithmetic
progression with at most $|A\pm A|-|A|+1$ terms.
\end{corollary}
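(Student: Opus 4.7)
The plan is to chain Lemma~\refl{Fpoints} and Lemma~\refl{fFB} through the standard dictionary between a large nontrivial Fourier coefficient of $A\seq\F_p$ and an arithmetic progression in $\F_p$ containing many elements of $A$. The only real task is to translate ``concentration on an arc of angle $\pi$'' into ``concentration on an arithmetic progression of $(p+1)/2$ terms.''

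First I would set $Z:=\{\chi(a):a\in A\}$. Since $\chi$ is a nonprincipal character of a group of prime order, $\chi$ is injective on $\F_p$, so $|Z|=|A|$ and $\sum_{z\in Z}z=\hA(\chi)$. The hypothesis $|\hA(\chi)|\ge\eta|A|$ then lets me apply Lemma~\refl{Fpoints} to $Z$, yielding an open arc of the unit circle of angle measure $\pi$ that contains at least $\frac12(1+\eta)|A|$ points of $Z$; equivalently, the $\chi$-images of at least $\frac12(1+\eta)|A|$ elements of $A$ lie on this arc.

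Next I would transport this arc back to $\F_p$. Writing $\chi(x)=e^{2\pi itx/p}$ for some nonzero $t\in\F_p$, the preimage under $\chi$ of any open arc of angle $\pi$ is the set of $x\in\F_p$ such that $tx\bmod p$ lies in an open real interval of length $p/2$; as $p$ is odd, such an interval contains at most $(p+1)/2$ residues. Hence the preimage is contained in an arithmetic progression $P\seq\F_p$ of common difference $t^{-1}$, and extending $P$ if necessary, I may take $P$ to have exactly $(p+1)/2$ terms. By construction $P$ contains at least $\frac12(1+\eta)|A|\ge\frac13 K|A|$ elements of $A$, the last inequality being precisely the assumption on $\eta$.

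Combined with the hypotheses $|A|<p/12$ and $|A\pm A|<K|A|-3$, this supplies exactly the input required by Lemma~\refl{fFB} in its sum-set version, or by its difference-set analogue noted in the remark following the lemma; applying the appropriate version gives the desired conclusion that $A$ is contained in an arithmetic progression with at most $|A\pm A|-|A|+1$ terms. There is no genuine obstacle in the argument --- it is essentially bookkeeping --- and the only slight point requiring care is the elementary verification in Step~2 that an open arc of angle exactly $\pi$ on the unit circle accommodates at most $(p+1)/2$ of the $p$-th roots of unity.
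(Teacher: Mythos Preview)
Your proposal is correct and is precisely the combination the paper intends: apply Lemma~\refl{Fpoints} to the point set $\{\chi(a):a\in A\}$, pull the resulting open half-arc back to an arithmetic progression of at most $(p+1)/2$ terms in $\F_p$, and feed this into Lemma~\refl{fFB} (or its difference-set analogue). The paper states the corollary without proof beyond the words ``combining Lemmas~\refl{Fpoints} and~\refl{fFB},'' and your write-up correctly supplies the routine details, including the only genuine check --- that an open arc of angle $\pi$ meets the $p$-th roots of unity in at most $(p+1)/2$ points.
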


Finally, we state and prove a lemma which bounds the number of \emph{Schur
triples} contained in a subset of $\F_p$.
\begin{lemma}\label{l:3/4}
Let $p$ be a prime. For any set $D\sbs\F_p$ with $|D|$ odd and
$|D|\le(2p+1)/3$, we have 	
  $$ \sum_{x,y\in D} D(x-y) \le \frac34\,|D|^2+\frac14. $$
\end{lemma}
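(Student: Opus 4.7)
The plan is to establish the equivalent lower bound $\bar T := n^2 - \sum_{x,y\in D} D(x-y) \ge (n^2-1)/4$, where $n := |D|$. Since $n$ is odd, writing $n = 2m+1$, the target becomes $\bar T \ge m(m+1)$. This is tight on the centered symmetric arithmetic progression $\{-m,\dotsc,m\} \subset \F_p$ (valid when $p > 4m+1$): a direct computation there gives $\bar T = m(m+1)$ exactly, identifying the extremizer and the source of the additive $+1/4$ in the bound.

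Using the notation $|D_d| = |D \cap (D+d)|$ from the paper, I would rewrite $T = \sum_{d \in D} |D_d|$ and combine two ingredients. First, the Cauchy--Davenport theorem applied to $D$ and its complement $E := \F_p \setminus D$: since the hypothesis $|D| \le (2p+1)/3$ forces $|E| \ge (p-1)/3 \ge 1$, one obtains $|D + E| \ge |D| + |E| - 1 = p - 1$, so the sumset $D + E$ misses at most a single point of $\F_p$. Writing $\bar T = \sum_{d \in D}(n - |D_d|) = \sum_{d \in D} r_{D+E}(d)$, this controls the spread of the ``bad mass'' $r_{D+E}(d)$. Second, the symmetry $|D_d| = |D_{-d}|$ coming from the $(x,y) \leftrightarrow (y,x)$ involution pairs terms of $T$ across the involution $d \mapsto -d$ on $\F_p$, and the odd parity of $|D|$ forces the resulting count to have an integral structure that accounts for the exact $+1/4$ correction.

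The main obstacle will be extracting the sharp constant, rather than merely a bound of the shape $\bar T \ge n^2/4 - O(n)$. I expect this to require a brief case split around the Cauchy--Davenport minimum for $|D+D|$. In the rigid regime $|D + D| = 2|D| - 1$, Vosper's theorem (Theorem~\reft{v}) forces $D$ to be an arithmetic progression; the bound is then verified by direct computation over the one-parameter family of translates of a fixed AP of size $n$, observing that the maximum of $T$ is attained uniquely by the centered symmetric progression and equals exactly $(3n^2+1)/4$. Outside this regime the slack $|D+D| \ge 2|D|$ provides enough extra room that the strict inequality $\bar T > (n^2-1)/4$ follows from the Cauchy--Davenport/pairing estimate alone.
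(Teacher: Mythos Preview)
Your reformulation $\bar T=\sum_{x\notin D}r_{D+D}(x)\ge m(m+1)$ and identification of the extremizer are correct, but the Case~2 argument has a real gap. Knowing only $|D+D|\ge 2|D|$ together with Cauchy--Davenport on $D+E$ and the symmetry $|D_d|=|D_{-d}|$ yields at best $\bar T\ge |(D+D)\setminus D|\ge n$, which falls far short of $m(m+1)\sim n^2/4$ once $n\ge 5$. The observation that $|D+E|\ge p-1$ only tells you that at most one element has $r_{D+E}=0$; it gives no quantitative lower bound on $\sum_{d\in D}r_{D+E}(d)$. Similarly, the pairing $d\leftrightarrow -d$ explains integrality but does not by itself produce mass of order $n^2$ outside $D$.

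What would make your approach work is Pollard's inequality in $\F_p$, not merely Cauchy--Davenport. With $t=m+1$ one has $\sum_x\min(r_{D+D}(x),t)\ge t\min(p,2n-t)=t(3m+1)$, the last equality holding because the hypothesis $n\le(2p+1)/3$ is exactly equivalent to $3m+1\le p$. Since $\sum_{x\in D}\min(r_{D+D}(x),t)\le tn$, subtraction gives $\bar T\ge(m+1)(3m+1)-(m+1)(2m+1)=m(m+1)$ directly, with no case split and no appeal to Vosper. This is a genuinely different route from the paper, which instead invokes the compression result of~\refb{l} to reduce to the interval $[-m,m]$ and then computes explicitly. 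Both approaches are short once the right tool is named; yours becomes complete as soon as Cauchy--Davenport is upgraded to Pollard.
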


\begin{proof}
Let $n:=(|D|-1)/2$. The sum in the left-hand side counts triples $(x,y,z)\in
D^3$ with $x-y-z=0$. By \cite[Theorem~1]{b:l}, the number of such triples can
only increase if $D$ is replaced with the interval $[-n,n]\seq\F_p$.
Therefore, the sum in question does not exceed
\begin{align*}
  |\{ (x,y,z)\in [-n,n]^3 &\colon x-y-z=0 \}| \\
    &= |\{ (x,y)\in[-n,n]^2\colon y-x\in[-n,n]\}| \\
    &= \sum_{x\in[-n,n]} |[x-n,x+n]\cap[-n,n]| \\
    &= (2n+1) + 2\sum_{x=1}^n |[x-n,n]| \\
    &= (2n+1) + 2 \sum_{x=1}^n (2n+1-x) \\
    &= \frac34\,(2n+1)^2+\frac14;
\end{align*}
here all intervals are subsets of $\F_p$, and the assumption
$2n+1=|D|\le(2p+1)/3$ ensures that $[x-n,x+n]\cap[-n,n]=[x-n,n]$ whenever
$x\in[1,n]$.
\end{proof}

\section{Proofs of Theorems~\reft{diff} and \reft{sum}}\label{s:proofs}

For a subset $A\seq\F_p$ with $|A-A|=K|A|$, as an immediate application of
the Cauchy-Schwarz inequality we have $\E(A)\ge K^{-1}|A|^3$. We start with a
lemma improving this trivial bound; the lemma will be used in the proof of
Theorem~\reft{diff}.

\begin{lemma}\label{l:CS}
Let $p$ be a prime, and suppose that a subset $A\subset \F_p$ satisfies
$|A-A|=K|A|<p/2$. Then
  $$ \E(A) \ge
         \left( \frac1K + \frac1{3K(K+2)}\,(1-|A|^{-2}) \right) |A|^3. $$
\end{lemma}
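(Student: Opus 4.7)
Setting $D := A - A$ and $r(x) := |A_x|$, we have $\sum_{x \in D} r(x) = |A|^2$ and $\E(A) = \sum_{x \in D} r(x)^2$. The trivial Cauchy--Schwarz yields only $\E(A) \ge |A|^4/|D| = |A|^3/K$; to squeeze out the extra $|A|^3/(3K(K+2))$, the plan is to force non-uniformity of the distribution $r$ on $D$ via the Katz--Koester observation combined with Lemma~\refl{3/4}.

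The first step is the pointwise bound
\[
|D_x| \ge r(x) + |A| - 1, \qquad x \in D,
\]
obtained by combining the Katz--Koester inclusion $A_x - A \seq D_x$ with the Cauchy--Davenport theorem in $\F_p$: the hypothesis $|D| = K|A| < p/2$ ensures $|A_x| + |A| - 1 \le 2|A| - 1 < p$, so Cauchy--Davenport gives $|A_x - A| \ge |A_x| + |A| - 1$. The second step is to apply Lemma~\refl{3/4} to $D$ itself; this is legitimate because $p$ is odd and $D = -D$ force $|D|$ to be odd, while the hypothesis $|D| < p/2 \le (2p+1)/3$ supplies the size condition. We obtain
\[
\sum_{x \in D} |D_x| \le \tfrac{3}{4}|D|^2 + \tfrac{1}{4}.
\]

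The third and decisive step is to combine these two inequalities via a Cauchy--Schwarz (and possibly a secondary one). Starting from
\[
|A|^4 = \Bigl(\sum_{x \in D} r(x)\Bigr)^2 \le \Bigl(\sum_{x \in D} \tfrac{r(x)^2}{|D_x|}\Bigr)\Bigl(\sum_{x \in D} |D_x|\Bigr),
\]
the Schur bound gives $\sum_{x \in D} r(x)^2/|D_x| \ge 4|A|^4/(3|D|^2 + 1)$. Substituting $1/|D_x| \le 1/(r(x) + |A| - 1)$ from the pointwise bound, together with the decomposition $r^2/(r + |A| - 1) = r - (|A| - 1) r/(r + |A| - 1)$ and the identity $\sum_{x \in D} r(x)/(r(x)+|A|-1) + (|A|-1)\sum_{x \in D} 1/(r(x)+|A|-1) = |D|$, these sums can be unfolded into quantities directly related to $\E(A)$. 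A further Cauchy--Schwarz on the pair $(r(x), (r(x) + |A| - 1)^{1/2})$ should reduce the problem to a quadratic inequality for $\E(A)$, whose solution is the desired lower bound.

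The main obstacle is this final algebraic collapse. I expect the factor $K+2$ to emerge from $|A|^2 + |D|(|A|-1) = |A|((K+1)|A| - K)$ in combination with further terms of the form $|D| + 2|A| = (K+2)|A|$; the constant $1/3$ should come from balancing against the $3/4$ of Lemma~\refl{3/4}; and the $1 - |A|^{-2}$ correction arises from the $+1/4$ in the Schur estimate. Tracking these constants accurately through the chain of Cauchy--Schwarz applications is the principal technical challenge, and at this level of sharpness one probably cannot afford to drop any $O(|A|)$-size error terms along the way.
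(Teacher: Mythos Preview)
Your proposal has a genuine gap, and it is precisely the step you flag as ``the main obstacle.'' The difficulty is not merely algebraic bookkeeping: the two ingredients you have assembled are not strong enough to force $\E(A)$ above the Cauchy--Schwarz baseline $|A|^3/K$.

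To see this, test your chain of inequalities against the (hypothetical, real-valued) uniform profile $r(x)\equiv|A|/K$ on $D$. It has $\sum r=|A|^2$ and $\E(A)=|A|^3/K$ on the nose. For this profile,
\[
\sum_{x\in D}\frac{r(x)^2}{r(x)+|A|-1}
   \;\approx\;\frac{|A|^2}{K+1},
\]
and your Schur-triple bound requires only that this exceed $4|A|^2/(3K^2)$, i.e.\ $3K^2\ge 4(K+1)$, which holds for every $K\ge 2$. So the constraint $\sum r^2/(r+|A|-1)\ge 4|A|^4/(3|D|^2+1)$ together with $\sum r=|A|^2$ is perfectly consistent with $\E(A)=|A|^3/K$; no amount of further Cauchy--Schwarz rearrangement of these same quantities will squeeze out the extra $1/(3K(K+2))$. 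The information has already been lost at the weighted Cauchy--Schwarz step $\bigl(\sum r\bigr)^2\le\bigl(\sum r^2/|D_x|\bigr)\bigl(\sum|D_x|\bigr)$.

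The paper takes a different route: it pivots through the third energy $\E_3(A)=\sum_x|A_x|^3$. The Schur-triple Lemma~\refl{3/4} is applied not to $\sum_x|D_x|$ but to the set $\{(a-b,a-c):a,b,c\in A\}\subseteq\{(x,y)\in D^2:x-y\in D\}$, and a Cauchy--Schwarz on $\sum_{x,y}|A\cap(A+x)\cap(A+y)|$ then yields $\E_3(A)\ge 4|A|^6/(3|D|^2+1)$. Crucially, the uniform profile \emph{fails} this: it gives $\E_3=|A|^4/K^2<\tfrac{4}{3}|A|^4/K^2$. The matching upper bound on $\E_3(A)$ in terms of $\E(A)$ comes from a centred-moment trick: writing $F(x)=|A_x|-|A|^2/|D|$ and using $\sum F^3\le(\max F)\sum F^2$, one gets
\[
\E_3(A)\le\Bigl(1+\tfrac{2}{K}\Bigr)\Bigl(\E(A)-\tfrac{|A|^4}{|D|}\Bigr)|A|+\tfrac{|A|^6}{|D|^2}.
\]
Combining the two bounds on $\E_3(A)$ yields the lemma directly. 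The Katz--Koester/Cauchy--Davenport pointwise bound $|D_x|\ge|A_x|+|A|-1$ that you invoke is not used here at all; the paper saves it for the proof of Theorem~\reft{diff} itself, where it enters via $\E(A,D)$.
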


\begin{proof}
Write $D:=A-A$ and $\lam:=|A|^2/|D|$, and let
  $$ F(x) := |A_x| - \lam D(x),\quad x\in\F_p $$
and
  $$ \sig_k := \sum_{x\in\F_p} F^k(x), $$
where $k$ is a positive integer. We have
\begin{gather}
  \sig_1  = \sum_{x\in\F_p} F(x)=0, \notag \\
  \sig_2  = \sum_{x\in\F_p} F^2 (x) =  \E(A) - 2\lam |A|^2 + \lam^2|D|
                                 =  \E(A) - \frac{|A|^4}{|D|}, \label{e:sig2}
\end{gather}
and
\begin{align}
  \sig_3  &=   \sum_{x\in\F_p} F^3 (x) \notag \\
          &=  \E_3(A) - 3\lam\E(A) + 3\lam^2|A|^2 - \lam^3 |D| \notag \\
          &=  \E_3(A) - 3\lam \left(\E(A) - \frac{|A|^4}{|D|}\right)
                                         - \frac{|A|^6}{|D|^2} \notag \\
          &=  \E_3(A) - 3\frac{|A|^2}{|D|}\sig_2
                                      - \frac{|A|^6}{|D|^2}. \label{e:sig3}
\end{align}

Also, from $F(x)\le |A|-\lam = |A|-|A|^2/|D|=(1-|A|/|D|)|A|$ we get
\begin{equation}\label{e:sig23}
  \sig_3 \le \Big( 1- \frac{|A|}{|D|} \Big) |A| \sig_2.
\end{equation}
From~\refe{sig2}--\refe{sig23},
\begin{multline}\label{e:E3Upper}
  \E_3(A) = \sig_3 + 3\,\frac{|A|^2}{|D|}\,\sig_2  + \frac{|A|^6}{|D|^2}
      \le \Big( 1 + 2\,\frac{|A|}{|D|} \Big) |A| \sig_2
                                                + \frac{|A|^6}{|D|^2} \\
      =\Big( 1 + 2\,\frac{|A|}{|D|} \Big)
          \Big( \E(A)-\frac{|A|^4}{|D|}\Big) |A| + \frac{|A|^6}{|D|^2}.
\end{multline}

We now use the basic properties of higher energies from \refb{ss} to estimate
$\E_3(A)$ from below. To this end, we observe that
\begin{align*}
  \sum_{x,y\in\F_p} |A\cap (A+x) \cap (A+y)|
     &= \sum_{x,y\in\F_p} |\{a\in A\colon a-x,a-y\in A\}| \\
     &= \sum_{a\in A} |\{(x,y)\in\F_p^2\colon a-x,a-y\in A \}| \\
     &= \sum_{a\in A} |A|^2 \\
     &= |A|^3.
\end{align*}
In a similar way, considering pairs $(a,b)\in A^2$ with $a-x,a-y,b-x,b-y\in
A$, we get
  $$ \sum_{x,y\in\F_p} |A\cap (A+x) \cap (A+y)|^2 = \E_3(A). $$
Furthermore, the number of non-zero summands in these sums is the number of
pairs $(x,y)$ such that there exist $a,b,c\in A$ with $x=a-b$ and $y=a-c$;
that is, the number of pairs representable in the form $(a-b,a-c)$, where
$a,b,c\in A$. Consequently, using the Cauchy--Schwarz inequality we obtain
\begin{align*}
 |A|^6 &= \Big( \sum_{x,y\in\F_p} |A\cap (A+x) \cap (A+y)| \Big)^2 \\
       &\le \sum_{x,y\in\F_p} |A\cap (A+x) \cap (A+y)|^2 \cdot
                                     |\{ (b-a,c-a)\colon a,b,c \in A \}| \\
	   &\le \E_3 (A) \sum_{x,y\in D} D(x-y).
\end{align*}
Applying Lemma~\refl{3/4}, we conclude that
  $$ \E_3(A) \cdot \Big(\frac34\,|D|^2+\frac14\Big) \ge |A|^6. $$
Since $\E_3(A)\le|A|^4$, this leads to
  $$ \E_3(A) \ge \frac43\frac{|A|^6}{|D|^2} - \frac13\frac{|A|^4}{|D|^2}. $$
From this inequality and~\refe{E3Upper},
  $$ \frac43 \frac{|A|^6}{|D|^2}-\frac13\,\frac{|A|^4}{|D|^2}
      \le  \Big( 1 + 2\,\frac{|A|}{|D|} \Big)
         \Big( \E(A)-\frac{|A|^4}{|D|} \Big) |A| + \frac{|A|^6}{|D|^2}, $$
and a short computation gives
\begin{align*}
  \E(A) &\ge \frac{|A|^4}{|D|} + \frac13\,
      \frac{|A|^5-|A|^3}{(|D|+2|A)|D|} \\
        &= \Big( \frac1K + \frac{1}{3K(K+2)}
                          - \frac{1}{3K(K+2)|A|^2}\Big) |A|^3.
\end{align*}
\end{proof}

We are now ready to prove Theorems~\reft{diff} and~\reft{sum}.
\begin{proof}[Proof of Theorem~\reft{diff}]
Using the Cauchy-Davenport and Vosper theorems, it is easy to verify the
assertion for $|A|\le 4$; we therefore assume throughout that $|A|\ge 5$.
We set $D:=A-A$ and $K:=|D|/|A|$; thus, $K<2.6$.

Let $\eta$ be defined by
$\max\{|\hA(\chi)|\colon\chi\in\widehat{\F_p}\stm\{1\}\}=\eta|A|$, and
let $\alp:=|A|/p$ be the density of $A$. In view of
\begin{equation}\label{e:Erho}
  \E(A) = p^{-1} \sum_{\chi\in\widehat{\F_p}} |\hA(\chi)|^4
                                                \le \alp|A|^3 + \eta^2 |A|^3,
\end{equation}
from Lemma~\refl{CS} we obtain
\begin{equation}\label{e:weak}
  \eta^2 \ge \frac1K + \frac{1}{3K(K+2)} - \frac{1}{3K(K+2)|A|^2} - \alp.
\end{equation}

With some extra effort, we now prove a slightly better bound, in the spirit
of \cite{b:ss} where a short proof of a Katz-Koester energy
result~\cite{b:kk} is presented.

Consider the sum
\begin{equation}\label{e:difsum}
   \sum_{x\in D} |A_x| |A-A_x|.
\end{equation}
The term corresponding to $x=0$ is $|A||D|$, while for every element $x\in
D\stm\{0\}$ we have $|A-A_x|\ge |A|+|A_x|-1$ by the Cauchy-Davenport theorem.
Therefore
\begin{align}
  \sum_{x\in D} |A_x| |A-A_x|
    &\ge |A||D| + \sum_{x\in D\stm\{0\}} |A_x| (|A| + |A_x| - 1) \notag \\
    &=   |A||D| + (|A|^2-|A|)|A| + (\E(A) - |A|^2) - (|D|-1) \notag \\
    &> |A|^3 + \E(A) + (K-2)|A|^2 - K|A|. \label{e:0512}
\end{align}

Combining this estimate with the estimate of Lemma~\refl{CS} and the
Katz-Koester observation $|A-A_x|\le |D_x|$, we get
\begin{align*}
  p^{-1}|A|^2|D|^2 + & p^{-1} \eta^2|A|^2 (p-|D|)|D| \\
     &\ge p^{-1}\, \sum_{\chi\in\widehat{\F_p}} |\hA(\chi)|^2 |\hD(\chi)|^2 \\
     &=   \sum_{x\in\F_p} |A_x| |D_x| \\
     &= \sum_{x\in D} |A_x| |D_x| \\
     &> |A|^3 + \E(A) + (K-2)|A|^2  - K|A| \\
	 &\ge  \left(1+ \frac{1}{K} + \frac{1}{3K(K+2)}
                     - \frac1{3K(K+2)|A|^2}
                         + \frac{K-2}{|A|} - \frac{K}{|A|^2} \right)\,|A|^3.
\end{align*}
Asymptotically, in the regime where $|A|$ grows, but $\alp K^2=o(1)$, this
yields
  $$ \eta^2 \ge \frac1K+\frac1{K^2}+\frac1{3K^2(K+2)} + o(1) $$
(which is worth comparison against~\refe{weak}).

To obtain an explicit version of this estimate suitable for our present
purposes, we let $\eta_0:=\frac23\cdot 2.6-1$ and notice that if
$\eta<\eta_0$, then the last computation gives
  $$ \alp K^2 + K(1-\alp K)\eta_0^2
       \ge 1 + \frac{1}{K} + \frac{1}{3K(K+2)} - \frac1{3K(K+2)|A|^2}
                        + \frac{K-2}{|A|} - \frac{K}{|A|^2}; $$
equivalently,
  $$ (1-\eta_0^2) \alp K^2 + \Big(\eta_0^2-\frac1{|A|}\Big)K+\frac{2}{|A|}
       \ge 1 + \frac{1}{K} + \frac{1}{3K(K+2)}
                     - \frac1{3K(K+2)|A|^2} - \frac{K}{|A|^2}. $$
Since the left-hand side is an increasing function of $K$, while the
right-hand side is decreasing, the inequality remains valid with $K$
substituted by $2.6$; making the substitution, dividing through by
$(1-\eta_0^2)K^2$, and computing numerically, we obtain
  $$ \alp > 0.0045 + \frac{0.1920}{|A|} - \frac{0.8410}{|A|^2}
       > 0.0045, $$
contrary to the assumptions. Thus, $\eta\ge\eta_0$, and an application of
Corollary~\refc{f} completes the proof.
\end{proof}

The proof of Theorem~\reft{sum} is in fact a simplified version of that of
Theorem~\reft{diff}, due to the fact that some components of the proof
specific for the differences cannot be reproduced for the sums, and are thus
omitted. As a result, the argument is somewhat shorter, but the eventual
estimate is slightly less precise.
\begin{proof}[Proof of Theorem~\reft{sum}]
As in the proof of Theorem~\reft{diff}, we write $D:=A-A$ and
$\alp:=|A|/p$, and define $\eta$ by
$\max\{|\hA(\chi)|\colon\chi\in\widehat{\F_p}\stm\{1\}\}=\eta|A|$. We
also let $S:=A+A$ and $K:=|S|/|A|$.

Instead of Lemma~\refl{CS}, our starting point is the estimate
$\E(A)\ge|A|^4/|S|$ following readily from the Cauchy-Schwarz inequality.
Instead of~\refe{difsum}, we now consider the sum $\sum_{x\in D}
|A_x||A+A_x|$ for which, applying the Cauchy-Davenport theorem, we get
\begin{align*}
  \sum_{x\in D} |A_x||A+A_x|
     &\ge \sum_{x\in D} |A_x|(|A|+|A_x|-1) - |A|(2|A|-1) + |A||S| \\
     &=   |A|^3 + \E(A) + |A||S|-3|A|^2 + |A|,
\end{align*}
cf.~\refe{0512}. Using, on the other hand, the estimate
 $|A+A_x|\le |S_{x}|$, we obtain
\begin{align*}
  p^{-1}|A|^2|S|^2 + & p^{-1} \eta^2|A|^2 (p-|S|)|S| \\
     &\ge p^{-1}\, \sum_{\chi\in\widehat{\F_p}} |\hA(\chi)|^2 |\hS(\chi)|^2 \\
     &=   \sum_{x\in D} |A_x| |S_x| \\
     &\ge \sum_{x\in D} |A_x||A+A_x| \\
     &\ge |A|^3 + \E(A) + |A||S|-3|A|^2 + |A| \\
	 &\ge \left(1+ \frac{1}{K} - \frac{3-K}{|A|} \right)\,|A|^3.
\end{align*}
As a result,
  $$ \eta^2 \ge \frac1{K(1-\alp K)}\,
                \left(1+ \frac{1}{K} - \frac{3-K}{|A|} - \alp K^2  \right). $$
Let $\eta_0:=\frac23\cdot2.59-1$. If we had $\eta<\eta_0$, this would
imply
  $$ K(1-\alp K) \eta_0^2 > 1 + \frac{1}{K} - \frac{3-K}{|A|} - \alp K^2; $$
that is,
  $$ (1-\eta_0^2)\alp K^2 + \Big(\eta_0^2-\frac1{|A|}\Big)K
                                             + \frac{3}{|A|} > 1 + \frac1K. $$
Since the left-hand side is an increasing function of $K$, while the
right-hand side is decreasing, in view of $K<2.59$ we would conclude that the
last inequality stays true if $K$ gets substituted by $2.59$; substituting,
normalizing, and computing numerically, we obtain
  $$ \alp + \frac{0.1296}{|A|}> 0.0058, $$
contradicting the assumptions $\alp<0.0045$ and $|A|>100$.

Thus, $\eta\ge\eta_0$, and we invoke Corollary~\refc{f} to complete the
proof.
\end{proof}

\appendix
\section{Arbitrary groups}

The standard argument shows that for a subset $A$ of an arbitrary finite
abelian group $G$, keeping the notation $K$ for the doubling coefficient
$|A+A|/|A|$, and $\eta|A|$ for the largest absolute value of a non-trivial
Fourier coefficient of the indicator function of $A$, one has
  $$ \eta \ge \frac1{\sqrt K}\,\sqrt{\frac{1-\gam}{1-\alp}}, $$
where $\alp:=|A|/|G|$ and $\gam:=|A+A|/|G|$ are the densities of $A$ and
$A+A$, respectively.

The same estimate holds true for the difference set $A-A$. In any case,
assuming $\gam=o(1)$, we get
\begin{equation}\label{f:rho_stupid}
	\eta \ge \frac{1+o(1)}{\sqrt{K}}.
\end{equation}
In the case of difference sets we have the following slight improvement which
basically replaces the term $o(1)$ in \eqref{f:rho_stupid} with a positive
constant.

\begin{theorem}\label{t:G_rho}
Let $A$ be a non-empty subset of a finite abelian group $G$ of density
$\alp=|A|/|G|$. If $|A-A|=K|A|$, then
  $$ \eta
     \ge \left(\frac{1+\sqrt{5}}{2} \right)^{1/2}
                                      \frac{1+O(K^3\alp)}{\sqrt{K+1}}, $$
where $\eta$ is defined by
$\max\{|\hA(\chi)|\colon\chi\in\hG\stm\{1\}\}=\eta|A|$.
\end{theorem}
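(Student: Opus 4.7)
The plan is to adapt the strategy of Theorem~\reft{diff} to an arbitrary finite abelian group, replacing the appeal to the Cauchy--Davenport inequality (which is $\F_p$-specific) by the Pl\"unnecke--Ruzsa estimate $|2A-A|\le K^3|A|$.

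First I will apply Parseval to the sum $\sum_x|A_x||D_x|$, where $D:=A-A$, $\alp:=|A|/|G|$, and $\gam:=|D|/|G|=K\alp$. Separating the trivial character and using $|\hA(\chi)|\le\eta|A|$ for $\chi\ne 1$ together with Parseval for $\hD$ gives the Fourier upper bound
$$\sum_x|A_x||D_x|=\frac{1}{|G|}\sum_{\chi\in\hG}|\hA(\chi)|^2|\hD(\chi)|^2\le\alp K^2|A|^3+\eta^2K(1-\gam)|A|^3.$$

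Second, I will lower-bound $\sum_x|A_x||D_x|$ combinatorially. Setting $r(y):=|A\cap(D+y)|$, the identity $\sum_x|A_x||D_x|=\sum_y r(y)^2$ converts the problem into a lower bound on $\sum_y r(y)^2$. Since $A-a\seq D$ for every $a\in A$, we have $r(y)=|A|$ on $y\in A$, contributing exactly $|A|^3$. The support of $r$ is contained in $A+D=2A-A$, of size at most $K^3|A|$ by Pl\"unnecke--Ruzsa, and the off-$A$ mass is $\sum_{y\notin A}r(y)=(K-1)|A|^2$. Optimizing $\sum_y r(y)^2$ subject to these constraints together with the uniform bound $r(y)\le|A|$ should yield
$$\sum_x|A_x||D_x|\ge\frac{\phi K}{K+1}|A|^3-O(K^3\alp)|A|^3,$$
where $\phi=(1+\sqrt 5)/2$ emerges as the fixed point of the Euler--Lagrange equation of the optimization. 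Combining the upper and lower bounds and rearranging then gives $\eta^2\ge\phi/(K+1)+O(K^2\alp)$, which is equivalent to the stated inequality after squaring the factor $(1+O(K^3\alp))^2$.

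The main obstacle is the second step. A naive Cauchy--Schwarz on the off-$A$ part of the support produces the coefficient $K(K+2)/(K^2+K+1)$, which matches $\phi K/(K+1)$ only for small values of $K$; for larger $K$ the extremal configurations of $r$ (uniform on the support versus concentrated at the cap $|A|$) must be balanced more carefully, and the golden ratio appears through the defining quadratic $\phi^2=\phi+1$ of the resulting optimization. The $O(K^3\alp)$ error precisely tracks the use of the Pl\"unnecke--Ruzsa bound on $|2A-A|$.
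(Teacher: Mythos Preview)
Your setup is correct: the identity $\E(A,D)=\sum_y r(y)^2$ with $r=1_A*1_D$, the observation $r\equiv|A|$ on $A$, and the Fourier upper bound on $\E(A,D)$ are all fine. The gap is in the optimization step. The constraints you list on $r$ off $A$ --- fixed total mass $(K-1)|A|^2$, support contained in a set of size at most $(K^3-1)|A|$, and the cap $r\le|A|$ --- do \emph{not} force $\sum_{y\notin A}r(y)^2$ above the naive Cauchy--Schwarz value. The minimum of a sum of squares with prescribed sum and bounded support is attained by the uniform distribution over the support; the cap is irrelevant here because the uniform value $(K-1)|A|/(K^3-1)$ is already far below $|A|$ for every $K>1$. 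There is no Euler--Lagrange balancing to perform and no quadratic to solve: your constraints yield exactly the coefficient $K(K+2)/(K^2+K+1)$ and nothing more. For $K$ beyond roughly $2.5$ this is strictly smaller than $\phi K/(K+1)$, and asymptotically it only recovers the trivial $\eta^2\gtrsim 1/K$ rather than $\phi/K$. So the claimed bound $\E(A,D)\ge\phi K|A|^3/(K+1)$ is not established (and it is not clear it is even true pointwise for all $A$).

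The paper's argument is structurally different, and this difference is exactly where $\phi$ comes from. It never lower-bounds $\E(A,D)$ in isolation. Instead it couples \emph{two} Fourier upper bounds --- one for $\E_3(A)$, obtained from the pointwise inequality $F(x)\le(1-K^{-1})|A|$ together with~\refe{Erho}, and the one you wrote for $\E(A,D)$ --- with a combinatorial lower bound on their \emph{product}, namely $|A|^2\,\E_{3/2}(A)^2\le\E_3(A)\,\E(A,D)$ combined with the H\"older estimate $\E_{3/2}(A)\ge K^{-1/2}|A|^{5/2}$. Because each factor on the right contributes an $\eta^2$, one obtains a genuine quadratic
\[
\eta^4 K(K+2)-(K+1)\eta^2-1\ \ge\ O(K^3\alp),
\]
and $\phi$ appears as the positive root of $u^2-u-1=0$ with $u=(K+1)\eta^2$. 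The $O(K^3\alp)$ error comes from the density terms $\alp$ and $\alp K^2$ in these two Fourier bounds after clearing denominators; Pl\"unnecke--Ruzsa is never used.
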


\begin{proof}
From~\refe{E3Upper} and~\refe{Erho} (which are valid in any finite abelian
group, not necessarily of prime order),
\begin{equation}\label{e:rho_E_3}
  \E_3(A) \le \Big(1+\frac2K\Big)
          \Big(\eta^2-\frac{1}{K}+\alp\Big)|A|^4 + \frac{|A|^4}{K^2}.
\end{equation}
On the other hand, letting $D:=A-A$, by H\"older's inequality,
  $$ |A|^2 = \sum_{x\in D} |A_x|
                        \le |D|^{1/3} \Big( \sum_x |A_x|^{3/2} \Big)^{2/3}
                              = |D|^{1/3} (\E_{3/2}(A))^{2/3}; $$
that is,
  $$ \E_{3/2}(A) \ge K^{-1/2}|A|^{5/2}. $$
Combining this with the estimate
\begin{equation*}
  |A|^2\, \E^2_{3/2}(A) \le \E_3(A) \E(A,D)
\end{equation*}
established in \cite[Corollary~4.3]{b:s_mixed}, and then
with~\refe{rho_E_3} and
  $$ \E(A,D) = p^{-1}\sum_\chi |\hA(\chi)|^2 |\hD(\chi)|^2 \le
                              \alp K^2|A|^3 + \eta^2 K(1-\alp K)|A|^3, $$
we obtain
\begin{multline*}
   K^{-1}|A|^7 \le \E_3(A) \E(A,D)
      \le \left( \Big(1+\frac2K\Big)
           \Big(\eta^2-\frac{1}{K}+\alp\Big)|A|^4
                     + \frac{|A|^4}{K^2} \right) \\
      \cdot \big( \alp K^2|A|^3 + \eta^2 K(1-\alp K)|A|^3 \big).
\end{multline*}
This gives
  $$ 1 \le \left( (K+2)(\eta^2K-1)+1 \right)\cdot\eta^2 + O(K^3\alp), $$
and after rearranging the terms,
  $$ \eta^4 K(K+2) - (K+1)\eta^2 - (1+O(K^3\alp)) \ge 0. $$
It follows that
  $$ \Big( (K+1)\eta^2 - \frac12 \Big)^2 \ge \frac54 + O(K^3\alp)
        =\frac54\,(1+O(K^3\alp)), $$
whence
  $$ (K+1)\eta^2 \ge \frac12 + \sqrt{\frac54\,(1+O(K^3\alp))}
            = \frac{1+\sqrt 5}{2}\,(1+O(K^3\alp)),  $$
resulting in
  $$ \eta \ge \left(\frac{1+\sqrt 5}{2}\right)^{1/2}
                                \frac1{\sqrt{K+1}} \, (1+O(K^3\alp)). $$
\end{proof}

If something is known about subgroups of the group $G$ (as, for instance, in
the case where $G=\F_p$), then Fournier--type results~\refb{fou} can be
applied (see \cite[Lemma 7.2]{b:s} for a modern exposition), allowing one to
estimate $\E_{3}(A)$ from below nontrivially, and hence improving
Theorem~\ref{t:G_rho} in this situation.

%\vfill

\smallskip
\end{document}